\author{Luke Warren}
\title{Constructing a quasiregular analogue of $z\exp(z)$ in dimension 3}
\date{}
\begin{document}

\maketitle
\newtheorem{thm}{Theorem}[section]
\newtheorem{lem}[thm]{Lemma}
\newtheorem{cor}[thm]{Corollary}
\newtheorem{defn}[thm]{Definition}
\newtheorem{prop}[thm]{Proposition}
\newtheorem{thm*}{Theorem}
\newtheorem{lem*}{Lemma}
\newtheorem{prop*}{Proposition}
\newtheorem{pf*}{Proof of Theorem}\numberwithin{equation}{section}
\theoremstyle{definition}
\newtheorem{ex}[thm]{Example}
\newtheorem{ex*}{Example}

\newcommand{\capacity}{\operatorname{cap}}
\newcommand{\card}{\operatorname{card}}
\newcommand{\dist}{\operatorname{dist}}
\newcommand{\real}{\operatorname{Re}}
\newcommand{\im}{\operatorname{Im}}

	\begin{adjustwidth}{1cm}{1cm}
		\textbf{Abstract.} We construct a quasiregular analogue of the function $z\exp(z)$ in dimension 3, which gives the first explicit example of a quasiregular mapping of transcendental type that has exactly one zero. We then modify the construction to create a family of such quasiregular mappings and study their dynamics. From this, we also construct the first quasimeromorphic mappings with an essential singularity at infinity where the backward orbit of infinity is non-empty and finite.
	\end{adjustwidth}

\section{Introduction}

For transcendental meromorphic mappings $f: \mathbb{C} \to \hat{\mathbb{C}}$, Nevanlinna theory can be used to study the distribution of values; this includes a defect relation which yields Picard's theorem as a corollary (see \cite{Hayman64} and \cite{Nevanlinna25}). Ahlfors \cite{Ahlfors35} developed a parallel theory to that of Nevanlinna which is more geometric in nature, including a pointwise version of the same defect relation. This version states that if $f$ is a non-constant transcendental meromorphic mapping, then there exists some small set $E \subset [1,\infty)$ such that for any distinct points $a_{1}, a_{2}, \dots, a_{n} \in \hat{\mathbb{C}}$, then
\begin{equation}\label{DefectRelation}
\limsup_{\substack{r \not \in E, \\ r \to \infty}}\sum_{i=1}^{n} \delta(a_{i},r) \leq 2.
\end{equation}
Here if $a \in \hat{\mathbb{C}}$ and $r>0$, then $\delta(a,r) \in [0,1]$, called the defect function of $a$, measures the rarity of the preimages of $a$ under $f$ against the average point of $\hat{\mathbb{C}}$ in the closed ball $\overline{B}(0,r)$. In particular if $a$ is an exceptional point of $f$, so $f^{-1}(a)$ is finite or empty, then $\delta(a,r) \to 1$ as $r \to \infty$. For example, for $z \mapsto \exp(z)/z$, then $\delta(0,r) \to 1$ and $\delta(\infty, r) \to 1$ as $r \to \infty$.

A converse result to the above was proven by Drasin in \cite{Drasin77}: for each $m \in \mathbb{N}$, let $a_{m} \in \hat{\mathbb{C}}$ be distinct points and let $\delta_{m} \in [0,1]$ with $\sum \delta_{m} \leq 2$. Then there exists some non-constant transcendental meromorphic function $g$ and some small set $E \subset [1,\infty)$ such that 
\begin{equation*}
	 \lim\limits_{r \to \infty, r \not \in E} \delta(a,r)= 
\left\lbrace \begin{array}{ll}
	\delta_{m} & \text{if } a=a_{m} \text{ for some } m \in \mathbb{N}, \\
	0 & \text{otherwise}.
\end{array}\right.
\end{equation*}

Quasiregular mappings and quasimeromorphic mappings defined on higher-dimensional Euclidean space $\mathbb{R}^d$, $d \geq 2$, generalise analytic and meromorphic functions on the plane respectively. When a quasiregular or quasimeromorphic mapping defined on $\mathbb{R}^d$ has an essential singularity at infinity, we say it is of transcendental type. We shall defer the definition of quasiregular and quasimeromorphic mappings to Section~2.
	
For a quasimeromorphic mapping of transcendental type $f: \mathbb{R}^d \to \hat{\mathbb{R}}^d$, where $d \geq 2$ and $\hat{\mathbb{R}}^d = \mathbb{R}^d \cup \{\infty\}$, Rickman \cite{Rickman92} managed to generalise Ahlfors' result to establish a pointwise defect relation. Here \eqref{DefectRelation} holds almost exactly, albeit with the value 2 replaced by a constant $q$ that depends only on the dimension $d$ and the dilatation $K$ of $f$. Furthermore in dimension $d=3$, Rickman extended Drasin's theorem to the quasimeromorphic setting, once again replacing the value 2 with the constant $q$ as before. For further details, we refer to \cite{Rickman3}.

Analogous to the meromorphic setting, a corollary of Rickman's defect relation is a quasimeromorphic version of Picard's theorem (see also \cite{Rickman80}). This result was shown to be sharp for dimension $d=3$ in \cite{Rickman85} and, more recently, in all dimensions $d \geq 3$ by Drasin and Pankka \cite{DP}. They showed that given any $y_{1}, y_{2}, \dots, y_{p} \in \mathbb{R}^d$, there exists a quasiregular map $f:\mathbb{R}^d \to \mathbb{R}^d$ omitting exactly $y_{1}, y_{2}, \dots, y_{p}$. As a very simple example, Zorich maps, which form quasiregular analogues of $z \mapsto \exp(z)$, omit both 0 and $\infty$; for their general construction, see for instance \cite{Zorich} or \cite[Section~6]{IM3}.

Although Drasin and Pankka's result can give us a quasiregular function with prescribed omitted points, none of the results above can be directly applied to get a quasimeromorphic mapping with at least one non-omitted exceptional point. Furthermore, in dimensions $d \geq 3$ there are currently no known quasiregular mappings of transcendental type where a value $x \in \mathbb{R}^d$ is taken at least once, but finitely often. 

We shall construct the first explicit example of a quasiregular mapping of transcendental type in $\mathbb{R}^3$ where a value is taken finitely often. As an immediate corollary, we will provide the first example of a quasimeromorphic mapping of transcendental type in $\mathbb{R}^3$ with a single omitted pole.

\begin{thm}\label{ThmConstruction}
There exists a quasiregular mapping of transcendental type $F:\mathbb{R}^3 \to \mathbb{R}^3$ such that 
	\begin{equation}\label{Thm1.1}
	F(x)=0 \text{ if and only if } x=0.
	\end{equation}
\end{thm}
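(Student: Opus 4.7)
The plan is to build $F$ as an explicit quasiregular analogue of $z\exp(z)$ by assembling three pieces: a bilipschitz interior piece on a small ball $B(0, r_{0})$ where $F$ looks like the identity and contributes a single zero at the origin; an exterior piece on the complement of a large ball $B(0, R)$ where $F$ agrees with a standard Zorich-type map, supplying the transcendental type at infinity and forbidding any further zeros; and a quasiregular interpolation on the annulus $A := B(0, R) \setminus \overline{B(0, r_{0})}$ joining these two regimes.

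For the exterior, I would fix a Zorich-type map $Z: \mathbb{R}^{3} \to \mathbb{R}^{3} \setminus \{0\}$, which is quasiregular of transcendental type and omits both $0$ and $\infty$, and set $F := Z$ outside $B(0, R)$. After a suitable normalisation, $|Z|$ is bounded below by a positive constant on $\partial B(0, R)$, mirroring the exponential factor of $z\exp(z)$. For the interior, I would take $F(x) := c\, x$ for an appropriate scaling $c > 0$ on $B(0, r_{0})$, giving the unique zero at the origin demanded by \eqref{Thm1.1} and mirroring the linear factor $z$.

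The crucial step is the interpolation on $A$: define $F$ there to be quasiregular with uniformly bounded dilatation, agreeing on $\partial B(0, r_{0})$ with $cx$ and on $\partial B(0, R)$ with $Z$, while ensuring the image of $A$ stays away from $0$ so that no spurious zero is introduced. I would carry this out by decomposing $A$ into finitely many concentric spherical shells and building $F$ on each shell by an explicit piecewise construction, working in logarithmic radial coordinates so that the exponential growth of $Z$ becomes linear and can be smoothly connected to the linear interior behaviour, in the spirit of the radial-shell constructions used for instance by Drasin and Pankka. The main obstacle will be precisely this interpolation: one must simultaneously accommodate the radial linear factor coming from the interior, the angular (approximately periodic) structure of $Z$ coming from the exterior, keep the dilatation uniformly bounded across the shell joints, and preserve the separation from $0$ of the image. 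It is this balancing act that realises $F$ as a genuine analogue of $z\exp(z)$.
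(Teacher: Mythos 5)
Your plan runs into a topological obstruction at the interpolation step that cannot be fixed, and it is precisely the obstruction the paper's construction is designed to circumvent. You want $F$ to equal a Zorich map $Z$ on $\partial B(0,R)$, to equal $cx$ on $\partial B(0,r_{0})$, and to avoid $0$ on the closed annulus $\overline{A}$ between them. Consider the radial projection $\hat{F}(x) := F(x)/|F(x)| \in S^{2}$, which would then be defined and continuous on all of $\overline{A}$. Since $\overline{A}$ is homeomorphic to $S^{2} \times [0,1]$, the two boundary restrictions $\hat{F}|_{\partial B(0,r_{0})}$ and $\hat{F}|_{\partial B(0,R)}$ would have to be homotopic as maps $S^{2} \to S^{2}$, hence have equal degree. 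On the inner sphere, $\hat{F}(x) = x/|x|$ has degree $1$; more generally, for any non-constant quasiregular $F$ with $F^{-1}(0) = \{0\}$, this degree is the local index $i(0,F) \geq 1$. On the outer sphere, however, the degree of $\hat{Z}|_{\partial B(0,R)}$ is $0$: since $Z$ is defined and $0$-omitting on the \emph{solid} ball $\overline{B(0,R)}$, the map $H(\omega,t) := Z(tR\omega)/|Z(tR\omega)|$ gives a null-homotopy of $\hat{Z}|_{\partial B(0,R)}$. (This is the $3$-dimensional analogue of the fact that $\exp$ has winding number $0$ on large circles, while $z$ has winding number $1$.) So no continuous interpolation on $A$ can avoid $0$; the dilatation bookkeeping never gets off the ground because the purely topological gluing problem already has no solution. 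This is also exactly why, as the paper remarks, the Drasin--Pankka machinery for prescribing \emph{omitted} values does not directly yield a non-omitted finite value.

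The paper avoids the issue by never gluing $Z$ directly to the identity across an annulus. Instead it builds $F = Z \circ T \circ g \circ T^{-1} \circ \phi$ as a semi-conjugate of an auxiliary transcendental-type quasiregular map $g$ under the Zorich map, mirroring the identity $z e^{z} = \exp(\log z + z)$. Because $g$ is the identity on a lower half-space and $x + Z(x)$ on an upper half-space, and because $g$ respects the period lattice and rotational symmetry of $Z$ (properties (G3), (G4)), the composition is well-defined independent of the branch $\phi$, is the identity near $0$, and automatically has local index $1$ at $0$ and matching degree $1$ on every large sphere. In other words, the semi-conjugation produces in one stroke the correct wrapping at every scale, which is exactly what your annulus interpolation would need to supply by hand but cannot. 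If you want to salvage the gluing philosophy, the exterior model cannot be $Z$ itself; it must already be a map whose restriction to large spheres links $0$ with degree $1$, and constructing such a map of transcendental type is essentially the whole problem.
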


\begin{cor}\label{CorConstruction}
	There exists a quasimeromorphic map of transcendental type $f:\mathbb{R}^3 \to \hat{\mathbb{R}}^3$ such that 
	\begin{equation}\label{Cor1.2}
	\mathcal{O}^{-}_{f}(\infty) =\{0, \infty\}.
	\end{equation}
\end{cor}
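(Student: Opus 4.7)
The plan is to realise $f$ as the composition $f = \iota \circ F$, where $F$ is the quasiregular mapping provided by Theorem~\ref{ThmConstruction} and $\iota \colon \hat{\mathbb{R}}^3 \to \hat{\mathbb{R}}^3$ is the M\"obius inversion $\iota(x) = x/|x|^2$, extended by $\iota(0) = \infty$ and $\iota(\infty) = 0$. In particular this forces $f(0) = \iota(F(0)) = \iota(0) = \infty$. Since $\iota$ is conformal, the composition $f$ is quasimeromorphic with the same dilatation as $F$.

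The main point to verify is that $f$ remains of transcendental type, i.e.\ that $\infty$ is an essential singularity of $f$. I would argue by contradiction: were $f$ to extend continuously to $\hat{\mathbb{R}}^3$ at $\infty$, then, since $\iota$ is an involutive homeomorphism of $\hat{\mathbb{R}}^3$, the identity $F = \iota \circ f$ would yield a continuous extension of $F$ at $\infty$, contradicting the fact that $F$ has an essential singularity there.

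It then remains to compute the backward orbit. By \eqref{Thm1.1}, $f(x) = \infty$ if and only if $F(x) = 0$, which happens only for $x = 0$, so $f^{-1}(\infty) = \{0\}$. In turn, $f(x) = 0$ would force $F(x) = \infty$, which never occurs since $F$ takes values in $\mathbb{R}^3$; hence $f^{-1}(\{0\}) = \emptyset$ and the iterated preimages stabilise, giving $\mathcal{O}^{-}_{f}(\infty) = \{0, \infty\}$ as required. The only delicate step in this plan is preserving the essential singularity under post-composition with $\iota$, and the short involutivity argument above handles it.
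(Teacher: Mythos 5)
Your overall plan is the right one (post-compose $F$ with a M\"obius map swapping $0$ and $\infty$), and your backward-orbit computation and essential-singularity argument are both fine, but your specific choice of M\"obius map does not work. The inversion $\iota(x) = x/|x|^2$ is sense-reversing in every dimension $d \geq 2$: its differential is $D\iota(x) = |x|^{-2}(I - 2\hat{x}\hat{x}^T)$ with $\hat{x} = x/|x|$, and $I - 2\hat{x}\hat{x}^T$ is a Householder reflection with determinant $-1$, so $J_\iota(x) = -|x|^{-2d} < 0$. Since quasiregular mappings are required to satisfy $J_f \geq 0$ a.e., and quasimeromorphic mappings are defined locally by post-composition with a \emph{sense-preserving} M\"obius map, the composition $\iota \circ F$ is sense-reversing where $F$ is nonconstant and hence is not quasimeromorphic. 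Calling $\iota$ ``conformal'' hides exactly this issue: it is anti-conformal, which is disallowed.

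The fix is exactly what the paper does: replace $\iota$ by a \emph{sense-preserving} M\"obius map $M$ with $M(0)=\infty$, $M(\infty)=0$, for instance
\begin{equation*}
M((x_1,x_2,x_3)) = \frac{1}{x_1^2+x_2^2+x_3^2}(x_1,x_2,-x_3),
\end{equation*}
which is $\iota$ followed by a reflection in the plane $\{x_3 = 0\}$ and is therefore orientation-preserving. This $M$ is also an involutive homeomorphism of $\hat{\mathbb{R}}^3$, so your contradiction argument for the essential singularity goes through verbatim, as does your computation that $f^{-1}(\infty)=\{0\}$ and $f^{-1}(0)=\emptyset$, giving $\mathcal{O}^{-}_{f}(\infty)=\{0,\infty\}$. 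With this one substitution, your proof matches the paper's.
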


As a remark, observe that we can write the complex function $z\exp(z) = \exp(G(\log(z)))$, where $G(z) = z + \exp(z)$. One reason for describing our function $F$ in Theorem~\ref{ThmConstruction} as an analogue of $z\exp(z)$ is that it will be defined as a composition $F = \mathcal{Z} \circ g \circ \mathcal{Z}^{-1}$, where $\mathcal{Z}$ is analogous to $\exp$ and $g$ is analogous to $G$.

When studying the dynamical behaviour of transcendental meromorphic functions on $\mathbb{C}$, different techniques are used based on whether the backward orbit of infinity is finite or infinite. Recently, the Julia set for quasimeromorphic mappings of transcendental type with at least one pole has been investigated in \cite{Warren1}. There, the analysis of the Julia set also requires different techniques based on the cardinality of the backward orbit of infinity. The example constructed in Corollary~\ref{CorConstruction} therefore shows that such maps exist, justifying the necessity for the different techniques used in \cite{Warren1}.

The construction method used in the proof of Theorem~\ref{ThmConstruction} readily generalises to create other quasiregular mappings that satisfy \eqref{Thm1.1}. In particular, we exhibit a family of quasiregular mappings in dimension $d=3$ for which the quasi-Fatou set is connected and coincides with the attracting basin of 0. Using this quasiregular family, we are able to construct a particular family of quasimeromorphic mappings that satisfy \eqref{Cor1.2}, for which we can give explicit points whose iterates are neither bounded nor tend to infinity; this is summarised in the following result. These two families of quasiregular and quasimeromorphic mappings may be of independent interest.

\begin{thm}\label{Thm1.3Dynamics}
	There exists a family $\mathcal{F}$ of quasiregular mappings of transcendental type such that for all $f \in \mathcal{F}$, $f$ satisfies \eqref{Thm1.1}, $J(f)$ contains half-rays and $QF(f)$ is connected. 
	
	Moreover, there exists a family $\mathcal{G}$ of quasimeromorphic mappings of transcendental type such that for all $g \in \mathcal{G}$, $g$ satisfies \eqref{Cor1.2} and there exist half-rayson which the iterates of $g$ neither stay bounded nor tend to infinity.
\end{thm}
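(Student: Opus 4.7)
The plan is to deform the construction of Theorem~\ref{ThmConstruction} into a parametrised family, and then to exploit the explicit factorisation $F = \mathcal{Z} \circ g \circ \mathcal{Z}^{-1}$ together with the attracting fixed point at $0$ to extract the dynamical conclusions.

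For the family $\mathcal{F}$, I would introduce a parameter into the inner map $g$ --- for instance by scaling the exponential-like part, in analogy with the complex family $G_{\lambda}(z) = z + \lambda \exp(z)$ --- chosen so that \eqref{Thm1.1} is preserved and $0$ becomes an attracting fixed point of $F$ with a definite basin. Half-rays in $J(F)$ then come from the preferred escape directions of the Zorich-type factor: along suitable vertical half-rays $\gamma \subset \mathbb{R}^{3}$, the preimage $\mathcal{Z}^{-1}(\gamma)$ is a horizontal half-line on which $g$ acts by roughly exponential translation, and one can arrange that a sub-half-ray of $\gamma$ is mapped into itself under $F$ with iterates escaping to infinity; such an escape ray fails equicontinuity for the iterates, so $\gamma \subset J(F)$.

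To show $QF(F)$ is connected, I would show it coincides with the immediate attracting basin $A^{*}(0)$, which is connected by definition. The containment $A^{*}(0) \subset QF(F)$ is standard; the reverse containment --- the main obstacle of the proof --- amounts to ruling out wandering domains, Fatou components on which orbits escape and other attracting cycles. The strategy is to use the factorisation to reduce dynamical questions about $F$ to the much simpler map $g$, whose complex model $G_{\lambda}$ has the entire plane as its basin of $0$ for sufficiently small parameter, and to argue that the semi-conjugacy $\mathcal{Z}$ transports this global attraction to everything outside the escape half-rays.

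For the family $\mathcal{G}$, set $\tilde g = \iota \circ F$, where $\iota(x) = x / |x|^{2}$ is inversion through the unit sphere. Then $\tilde g$ is quasimeromorphic of transcendental type, with $\tilde g^{-1}(\infty) = F^{-1}(0) = \{0\}$ and $\tilde g^{-1}(0) = F^{-1}(\infty) = \emptyset$, so $\mathcal{O}^{-}_{\tilde g}(\infty) = \{0, \infty\}$, giving \eqref{Cor1.2}. On an escape half-ray $\gamma$ as above, a point $x \in \gamma$ far from the origin has $|F(x)|$ large and hence $|\tilde g(x)|$ small; since $0$ is attracting for $F$, $|F(\tilde g(x))|$ is then even smaller, so $|\tilde g^{2}(x)|$ is again large, in fact strictly exceeding $|x|$. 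Iterating, $|\tilde g^{2k}(x)| \to \infty$ while $|\tilde g^{2k+1}(x)| \to 0$, so the orbit along $\gamma$ neither stays bounded nor tends to infinity, as required.
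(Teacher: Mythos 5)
Your overall strategy matches the paper's: build a one-parameter family by perturbing $g$, exploit the semi-conjugacy $F = \mathcal{Z}\circ g\circ\mathcal{Z}^{-1}$ to transfer dynamics, and compose with an inversion to get the quasimeromorphic family. The paper implements the perturbation slightly differently --- it sets $g_{t}(x) := g(x) + (0,0,t)$, so that by the Zorich scaling identity $Z(x+(0,0,c))=\exp(c)Z(x)$ one gets \emph{literally} $\lambda F$ for $\lambda = e^{t}$, and then invokes results from \cite{NS2} already proved for $g_{t}$ --- whereas your $G_{\lambda}(z)=z+\lambda\exp(z)$ analogue scales the exponential summand instead of translating; either should work but the paper's choice plugs directly into known results.

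There are, however, two genuine gaps. First, your inversion $\iota(x)=x/|x|^{2}$ is orientation-reversing on $\mathbb{R}^{3}$ (its differential at $e_{1}$ is $\mathrm{diag}(-1,1,1)$, with negative determinant), so $\iota\circ F$ is sense-reversing wherever $F$ is sense-preserving, and hence is not quasimeromorphic under the definition used here. You must compose with a further reflection; the paper uses $M(x_{1},x_{2},x_{3})=\frac{1}{|x|^{2}}(x_{1},x_{2},-x_{3})$, which agrees with $\iota$ on the plane $\{x_{3}=0\}$ containing your escape ray, so your computation of the oscillating orbit survives once the map is corrected. Second, the connectedness of $QF$ is not established by the strategy you sketch: ``ruling out wandering domains, escaping components, and other attracting cycles'' is precisely the hard part, and appealing to the complex model $G_{\lambda}$ having the whole plane as a basin does not transfer --- indeed $QF(g_{\log\lambda})$ is \emph{not} all of $\mathbb{R}^{3}$, since $J(g_{\log\lambda})$ contains vertical lines. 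The paper instead cites \cite{NS2} for the fact that $QF(g_{\log\lambda})$ is a single connected domain for small $\lambda$, proves $(Z\circ T)(QF(g_{\log\lambda}))\subset\mathcal{A}_{\lambda F}(0)\subset QF(\lambda F)$, and deduces equality and connectedness from surjectivity of $Z\circ T$ onto $\mathbb{R}^{3}\setminus\{0\}$ together with $B(0,1/2)\subset QF(\lambda F)$. You should also verify explicitly that the half-ray $\{t(1,1,0): |t|>L'\}$ is forward invariant under $\lambda F$ (the paper does this via the closed-form expression \eqref{LambdaFLineMap}); without that, the odd/even oscillation argument for $BU(f_{\lambda})$ does not close, since you need the orbit to remain on the ray.
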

\section{Preliminary results}
	
	\subsection{Quasiregular and quasimeromorphic mappings}
	
For notation, for $d \geq 2$ and $x \in \mathbb{R}^d$ we denote the $d$-dimensional ball centered at $x$ of radius $r>0$ as $B(x,r) = \{y \in \mathbb{R}^d : |x-y|<r\}$. For $R \in \mathbb{R}$, we shall denote the upper half-space $\{(y_{1},y_{2},y_{3}) \in \mathbb{R}^3 : y_{3} >R\}$ by $\mathbb{H}_{>R}$. We further define the upper and lower half-spaces $\mathbb{H}_{\geq R},\mathbb{H}_{\leq R}$ and $\mathbb{H}_{<R}$ similarly.

We will briefly recall the definition and some useful results for quasiregular and quasimeromorphic mappings here; see \cite{MRV1} and \cite{Rickman3}, for example, for a more comprehensive study of these mappings.
	
Let $d \geq 2$ and $G \subset \mathbb{R}^d$ be a domain. For $1 \leq p < \infty$, the Sobolev space $W_{p,loc}^{1}(G)$ is the collection of all functions $f:G \to \mathbb{R}^d$ for which all first order weak partial derivatives exist and are locally in $L^p$. Then we say that $f \in  W_{d,loc}^{1}(G)$ is quasiregular if it is continuous and there exists some constant $K\geq 1$ such that 
\begin{equation}\label{qrDefn1}
\left(\sup_{|h|=1}|Df(x)(h)|\right)^d \leq KJ_{f}(x) \text{ a.e.},
\end{equation}
where $Df(x)$ denotes the formal derivative of $f(x)$ and $J_{f}(x)$ denotes the Jacobian determinant. We denote the smallest constant $K$ for which \eqref{qrDefn1} holds by $K_{O}(f)$ and call this the outer dilatation of $f$.

If $f$ is quasiregular, then there also exists some $K' \geq 1$ such that
\begin{equation}\label{qrDefn2}
K'\left(\inf_{|h|=1}|Df(x)(h)|\right)^d \geq J_{f}(x) \text{ a.e.}
\end{equation}
We denote the smallest constant $K'$ for which \eqref{qrDefn2} holds by $K_{I}(f)$ and call this the inner dilatation of $f$. Now, the dilatation $K(f)$ of $f$ is defined as the maximum of $K_{O}(f)$ and $K_{I}(f)$. Finally if $K(f) \leq K$ for some $K \geq 1$, we say that $f$ is $K$-quasiregular.
	
The definition of quasiregularity can be extended to mappings into $\hat{\mathbb{R}}^d := \mathbb{R}^d \cup \{\infty\}$. For a domain $D \subset \mathbb{R}^d$, a continuous map $f : D \to \hat{\mathbb{R}}^d$ is called quasimeromorphic if every $x \in D$ has a neighbourhood $U_{x}$ such that either $f$ or $M \circ f$ is quasiregular from $U_{x}$ into $\mathbb{R}^d$, where $M:\hat{\mathbb{R}}^d \to \hat{\mathbb{R}}^d$ is a sense-preserving M\"{o}bius map such that $M(\infty) \in \mathbb{R}^d$.

If $f$ and $g$ are quasiregular mappings, with $f$ defined in the range of $g$, then the composition $f \circ g$ is quasiregular and the dilatation satisfies
\begin{equation}\label{compositionDilatation}
K(f \circ g) \leq K(f)K(g).
\end{equation}
	
Similarly, if $g$ is a quasiregular mapping and $f$ is a quasimeromorphic mapping defined in the range of $g$, then $f \circ g$ is quasimeromorphic and the above inequality also holds.

Many properties of analytic and meromorphic mappings have analogues within the setting of quasiregular and quasimeromorphic mappings. For example, Reshetnyak \cite{Reshetnyak1, Reshetnyak2} showed that every non-constant $K$-quasiregular map is discrete, open and sense-preserving, whilst Rickman \cite{Rickman80} extended Picard's theorem to the quasimeromorphic setting.

\subsection{Julia set of quasiregular mappings}

For notation, given $x \in \mathbb{R}^d$ we write $\mathcal{O}^{-}_{f}(x)$ to denote the backward orbit of $x$ under $f$, while we use $\mathcal{O}^{+}_{f}(x)$ to denote the forward orbit of $x$ under $f$. 

By following Bergweiler in \cite{Bergweiler6} and Bergweiler and Nicks in \cite{BN1}, for a quasiregular map $f:\mathbb{R}^d \to \mathbb{R}^d$, where $d \geq 2$, we define the Julia set of $f$ as
\begin{align} \label{JuliaSetDefn2}
J(f):= \{ &x \in \mathbb{R}^d : \capacity(\mathbb{R}^d \setminus \mathcal{O}^{+}_{f}(U_{x}))=0 \text{ for all} \nonumber \\ 
&\text{neighbourhoods } U_{x} \subset \mathbb{R}^d \text{ of } x \},
\end{align}
and define the quasi-Fatou set of $f$ as $QF(f) := \mathbb{R}^d \setminus J(f)$. Here, $\capacity(C)$ denotes the conformal capacity of a closed set $C \subset \hat{\mathbb{R}}^d$; see \cite{Rickman3} for the definition and further details. It is worth noting that sets of capacity zero are considered `small', in the sense that countable sets have capacity zero, whilst sets of capacity zero have Hausdorff dimension zero.

It has been shown that for quasiregular mappings of transcendental type, the Julia set $J(f)$ is closed, infinite and completely invariant. Further, for many quasiregular mappings of transcendental type, $J(f)$ has many other properties analogous to the classical Julia set for analytic functions on $\mathbb{C}$, such as the property that $J(f) = J(f^k)$ for any $k \in \mathbb{N}$; see \cite{BN1}.

\section{Proof of Theorem~\ref{ThmConstruction}}

For Theorem~\ref{ThmConstruction}, we shall construct a quasiregular mapping of transcendental type $F:\mathbb{R}^3 \to \mathbb{R}^3$ such that $F^{-1}(0)=\{0\}$. This shall be done through the composition of a quasiregular mapping of transcendental type $g:\mathbb{R}^3 \to \mathbb{R}^3$ and a modified version of the Zorich mapping $Z:\mathbb{R}^3 \to \mathbb{R}^3 \setminus \{0\}$, both constructed by Nicks and Sixsmith in \cite{NS2}. It will then follow that $f := M \circ F : \mathbb{R}^3 \to \hat{\mathbb{R}}^3$ is a quasimeromorphic mapping of transcendental type satisfying \eqref{Cor1.2}, where $M$ is a sense-preserving M\"{o}bius map of $\hat{\mathbb{R}}^3$ such that $M(0) = \infty$ and $M(\infty) =0$.

For $u,v \in \mathbb{R}$, let $R_{(u,v)}(x)$ denote the point attained by rotating the point $x \in \mathbb{R}^3$ by $\pi$ radians around the line $\{(u,v,t) : t \in \mathbb{R}\}$. Using this notation, observe that for any $x \in \mathbb{R}^3$ and any $\alpha,a,b \in \mathbb{R}$, if $y = R_{(u,v)}(x)$ for some $u,v \in \mathbb{R}$ then 
\begin{equation}\label{RotationRelation}
\alpha y +(a,b,0) = \alpha R_{(u,v)}(x) + (a,b,0) = R_{(\alpha u+a,\alpha v+b)}(\alpha x+(a,b,0)).
\end{equation}

Now, let $Z:\mathbb{R}^3 \to \mathbb{R}^3\setminus \{0\}$ be the Zorich-type map defined in \cite[Section~5]{NS2}, constructed as follows.

Firstly, for $(x_1, x_2, x_3) \in [-1,1]^2 \times \mathbb{R}$, set 

\begin{equation*}
Z((x_1, x_2, x_3)) := \exp(x_3)\left( x_1, x_2, 1-\max\{|x_1|, |x_2|\}\right).
\end{equation*}

We then extend $Z$ to a quasiregular mapping on $\mathbb{R}^3$ in the usual way; for every reflection in the face of the domain, we reflect in the plane $\{(y_{1},y_{2},y_{3}) : y_3=0\}$ in the image. 

It can be shown that $Z$ is periodic in the $x_1$ and $x_2$ directions with period 4, and that $Z$ satisfies the relation
\begin{equation} \label{ZorichRotation}
Z(R_{(1,1)}(x)) = Z(x).
\end{equation}

In addition, it can be seen from the definition that if $x \in \mathbb{R}^3$ and $c \in \mathbb{R}$, then
\begin{equation}\label{ZorichTranslation}
Z(x + (0,0,c)) = \exp(c)Z(x).
\end{equation}

Now let $g: \mathbb{R}^3 \to \mathbb{R}^3$ and $L>1$ denote the quasiregular map of transcendental type and the constant from \cite[Section~6]{NS2} respectively. It was shown there that using $Z$ as defined above, this map $g$ has the following properties:

\begin{enumerate}
\item[(G1)] $g(x)=x$ when $x \in \mathbb{H}_{\leq 0}$,
\item[(G2)] $g(x)=x+Z(x)$ when $x \in \mathbb{H}_{\geq L}$,
\item[(G3)] $g(x+c) = g(x)+c$ for $c \in \{(4,0,0), (0,4,0)\}$, and
\item[(G4)] $g(R_{(2,2)}(x)) = R_{(2,2)}(g(x))$.
\end{enumerate}

\subsection{Construction of the quasiregular map $F$}

By using the properties of $g$ and $Z$ above, we shall now proceed to the construction of $F$. First, define the translation $T:\mathbb{R}^3 \to \mathbb{R}^3$ by $T(x)=x-(1,1,0)$. Now define $F: \mathbb{R}^3 \to \mathbb{R}^3$ by setting $F(0)=0$, and for every $x \in \mathbb{R}^3 \setminus \{0\}$ set
\begin{equation*}
F(x) = (Z \circ T \circ g \circ T^{-1} \circ \phi)(x),
\end{equation*}
where $\phi$ is an inverse branch of $Z$. We claim that $F$ is independent of the choice of $\phi$ and that $F$ is a quasiregular mapping of transcendental type satisfying \eqref{Thm1.1}.
Initially, we will assume that $F$ is independent of the choice of $\phi$. Note that $F^{-1}(0)=\{0\}$ and
\begin{equation}\label{Schroder1}
(F \circ (Z \circ T))(x) = ((Z \circ T) \circ g)(x) \text{ for all } x \in \mathbb{R}^3 \setminus \{0\}.
\end{equation}

By property (G1) of $g$, we find that $(T^{-1} \circ \phi)(w)$ is a fixed point of $g$ for any $w \in B(0, 1/2) \setminus \{0\}$. It follows by construction that $F$ fixes all points in $B(0,1/2) \setminus \{0\}$, hence $F$ is continuous at 0. Further, for each $x \in \mathbb{R}^3 \setminus \{0\}$ we can choose an inverse branch $\phi$ such that $\phi$ is continuous at $x$. Now as $F$ is the composition of continuous mappings, we have that $F$ is also continuous on $\mathbb{R}^3 \setminus \{0\}$.

Next consider the branch set of $Z$, given by $B_{Z}= \{(2n+1, 2m+1, x_{3}) : n,m \in \mathbb{Z}, x_{3} \in \mathbb{R}\}$. By a direct calculation,
\begin{equation*}
Z(B_{Z}) \cup \{0\} = \{t(1,1,0) : t \in \mathbb{R}\}  \cup \{t(-1,1,0) : t \in \mathbb{R}\}.
\end{equation*} 

Let $V=Z(B_{Z}) \cup \{0\}$. Then there exists some $K \geq 1$ depending only on $Z$ such that for any $x \in \mathbb{R}^3 \setminus V$, it is possible to choose an inverse branch $\phi$ of $Z$ that is locally $K$-quasiconformal on some neighbourhood $U_{x} \subset \mathbb{R}^d \setminus V$ of $x$. As the functions $Z$,$g$,$T$ and $T^{-1}$ are quasiregular and the inverse branch $\phi$ can be chosen to be locally $K$-quasiconformal, then $F$ will be quasiregular on $\mathbb{R}^3 \setminus V$. Since $V$ has zero measure and $F$ is continuous on $\mathbb{R}^3$, it follows that $F$ is a quasiregular mapping of transcendental type on $\mathbb{R}^3$.

It remains to prove the claim that $F$ is independent of the choice of the inverse branch of $Z$. To this end, let $x \in \mathbb{R}^3 \setminus \{0\}$ and suppose that $\psi \neq \phi$ is a different inverse branch of $Z$ defined at $x$. Set $F_{1} := Z \circ T \circ g \circ T^{-1} \circ \psi$. 

By the construction of $Z$, there exist some $n,m \in \mathbb{Z}$ and $p \in \{0,1\}$ such that 
\begin{equation}\label{PointRelation}
\phi(x) = R^{p}_{(1,1)}(\psi(x)) + (4n,4m,0),
\end{equation}
where $R^{1}_{(1,1)} = R_{(1,1)}$ and $R^{0}_{(1,1)}$ denotes the identity mapping.

Let $u:= (T^{-1} \circ \phi)(x)$ and let $v:=(T^{-1} \circ \psi)(x)$. From \eqref{RotationRelation} and \eqref{PointRelation} we now get
\begin{equation*}
u = R^{p}_{(1,1)}(\psi(x)) + (1,1,0) + (4n,4m,0) = R^{p}_{(2,2)}(v) + (4n,4m,0).
\end{equation*}

It then follows from properties (G3) and (G4) of $g$ that
\begin{equation}\label{gPointRelation}
g(u) = g(R^{p}_{(2,2)}(v) + (4n,4m,0)) = R^{p}_{(2,2)}(g(v))+(4n,4m,0).
\end{equation}

By appealing to \eqref{RotationRelation}, then \eqref{gPointRelation} yields
\begin{equation*}
T(g(u)) = g(u)-(1,1,0) = R^{p}_{(1,1)}(g(v)-(1,1,0))+(4n,4m,0).
\end{equation*}

Finally, it follows from \eqref{ZorichRotation} and the periodicity of $Z$ that
\begin{align*}
F(x) &= (Z \circ T)(g(u)) \\
&= Z(R^{p}_{(1,1)}(g(v)-(1,1,0))+(4n,4m,0)) \\
&= Z(g(v)-(1,1,0)) \\
&= (Z \circ T)(g(v)) = F_{1}(x),
\end{align*}
therefore the claim follows.

As a remark, since $F$ satisfies \eqref{Schroder1} then $F$ and $g$ are semi-conjugate to each other by the quasiregular mapping $Z \circ T$. Thus for each $n \in \mathbb{N}$,
\begin{equation}\label{MapIteration}
F^n = Z \circ T \circ g^n \circ T^{-1} \circ \phi.
\end{equation}

\begin{proof}[Proof of Corollary~\ref{CorConstruction}]
Let $F$ be as above and let $M:\mathbb{R}^3 \to \hat{\mathbb{R}}^3$ be the sense-preserving M\"{o}bius map defined by 
\begin{equation}\label{Mobius}
M((x_{1},x_{2},x_{3})) = \frac{1}{x_{1}^2 + x_{2}^2 + x_{3}^2}(x_{1},x_{2},-x_{3}).
\end{equation}

As $M$ is quasimeromorphic and $F$ is quasiregular of transcendental type, then $f:= M \circ F:\mathbb{R}^3 \to \hat{\mathbb{R}}^3$ is a quasimeromorphic mapping of transcendental type. Further by the definition of $M$ and the fact that $F^{-1}(0)=\{0\}$, then $\mathcal{O}^{-}_{f}(\infty) = \{0,\infty\}$ as required.
\end{proof}

\section{Modifying the construction of $F$}
To create other quasiregular mappings of transcendental type with a value taken finitely often, we can consider replacing the function $g$ with a quasiregular function of transcendental type $\tilde{g}:\mathbb{R}^3 \to \mathbb{R}^3$ that satisfies the following properties:

\begin{enumerate}
\item[(I)] for $c \in \{(4,0,0), (0,4,0)\}$, then $\tilde{g}(x+c)=\tilde{g}(x) + \alpha c$ for some $\alpha \in \mathbb{Z}$,
\item[(II)] $\tilde{g}(R_{(2,2)}(x)) = R_{(2,2)}(\tilde{g}(x))$, and
\item[(III)] for every $M \geq 0$, there exists some $N \geq 0$ such that $\tilde{g}(x) \in \mathbb{H}_{\leq -M}$ whenever $x \in \mathbb{H}_{\leq -N}$.
\end{enumerate}

Let $\tilde{F}: \mathbb{R}^3 \to \mathbb{R}^3$ be defined by $\tilde{F}(0)=0$ and $\tilde{F}(x)= (Z \circ T \circ \tilde{g} \circ T^{-1} \circ \phi)(x)$ for all $x \in \mathbb{R}^3 \setminus \{0\}$, where $\phi$ is an inverse branch of $Z$. 

To show that $\tilde{F}$ is continuous at 0, let $\varepsilon >0$ be given and observe that $x \in \mathbb{H}_{\leq \log(\varepsilon)}$ implies that $(Z \circ T)(x) \in B(0, 2\varepsilon)$. By (III), there exists some $N \geq 0$ such that $\tilde{g}(y) \in \mathbb{H}_{\leq \log(\varepsilon)}$ whenever $y \in \mathbb{H}_{\leq -N}$. Therefore by taking $\delta = \exp(-N)/2>0$, then $w \in B(0,\delta)$ implies that $(T^{-1} \circ \phi)(w) \in \mathbb{H}_{\leq -N}$ and the continuity of $\tilde{F}$ follows.

Applying similar arguments to those in Section~3.1, we now get that $\tilde{F}$ is a well-defined quasiregular mapping of transcendental type that also satisfies
\begin{equation}\label{Schroder2}
(\tilde{F} \circ (Z \circ T))(x) = ((Z \circ T) \circ \tilde{g})(x) \text{ for all } x \in \mathbb{R}^3 \setminus \{0\}.
\end{equation}
From here, composing $\tilde{F}$ with $M$ from \eqref{Mobius} gives us another quasimeromorphic mapping of transcendental type that satisfies \eqref{Cor1.2}.

As before, $\tilde{F}$ and $\tilde{g}$ are semi-conjugate to each other by $Z \circ T$, and the iterates of $\tilde{F}$ and $\tilde{g}$ are related as in \eqref{MapIteration}. Consequently, there is a connection between the dynamics of $\tilde{F}$ and $\tilde{g}$. 

\begin{lem}\label{ModifyGJuliaSet}
Let $\tilde{g}$ be a quasiregular mapping satisfying \emph{(I)} - \emph{(III)} and let $\tilde{F} :=  Z \circ T \circ \tilde{g} \circ T^{-1} \circ \phi$. Then $(Z \circ T)(J(\tilde{g})) \subset J(\tilde{F})$. 
\end{lem}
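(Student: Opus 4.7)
The plan is to transport the capacity statement defining membership in $J(\tilde g)$ across the semi-conjugacy \eqref{Schroder2} to obtain the corresponding statement for $J(\tilde F)$. The two ingredients I would use are that iterating \eqref{Schroder2} gives $\tilde F^n \circ (Z \circ T) = (Z \circ T) \circ \tilde g^n$ for every $n \in \mathbb{N}$ (essentially \eqref{MapIteration}), and that $Z \circ T$ is a quasiregular surjection onto $\mathbb{R}^3 \setminus \{0\}$ which, by a standard theorem in Rickman's book, sends sets of $n$-capacity zero to sets of $n$-capacity zero.

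Concretely, I would take $y \in J(\tilde g)$, set $x := (Z \circ T)(y)$, and fix an arbitrary neighbourhood $U_x$ of $x$. By continuity of $Z \circ T$ at $y$, I can choose a neighbourhood $V_y$ of $y$ with $(Z \circ T)(V_y) \subseteq U_x$. The iterated semi-conjugacy then gives
$$ \mathcal{O}^{+}_{\tilde F}(U_x) \;\supseteq\; \mathcal{O}^{+}_{\tilde F}\bigl((Z \circ T)(V_y)\bigr) \;=\; (Z \circ T)\bigl(\mathcal{O}^{+}_{\tilde g}(V_y)\bigr). $$
Writing $A := \mathcal{O}^{+}_{\tilde g}(V_y)$, and using the surjectivity of $Z \circ T$ onto $\mathbb{R}^3 \setminus \{0\}$, every point of $\mathbb{R}^3 \setminus (Z \circ T)(A)$ is either $0$ or the image under $Z \circ T$ of some point of $\mathbb{R}^3 \setminus A$, so
$$ \mathbb{R}^3 \setminus \mathcal{O}^{+}_{\tilde F}(U_x) \;\subseteq\; \{0\} \cup (Z \circ T)(\mathbb{R}^3 \setminus A). $$

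Since $y \in J(\tilde g)$, the set $\mathbb{R}^3 \setminus A$ has capacity zero. Invoking the capacity-preservation theorem for the quasiregular map $Z \circ T$, the image $(Z \circ T)(\mathbb{R}^3 \setminus A)$ also has capacity zero; together with $\{0\}$ having capacity zero and subadditivity of capacity, this makes the right-hand side above a set of capacity zero. Hence $\mathbb{R}^3 \setminus \mathcal{O}^{+}_{\tilde F}(U_x)$ has capacity zero, and since $U_x$ was arbitrary, $x \in J(\tilde F)$, giving the claimed inclusion.

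The main obstacle I anticipate is the capacity-preservation step, but this is a standard result in the quasiregular literature. A minor technical point is confirming the iterated semi-conjugacy at $x = 0$ (or at points mapped to $0$ by some $\tilde g^k$), since \eqref{Schroder2} is stated only off $\{0\}$; however, choosing the inverse branch $\phi$ of $Z$ with $\phi((-1,-1,0)) = (-1,-1,0)$, a direct computation shows $\tilde F((-1,-1,0)) = (Z \circ T)(\tilde g(0))$, so the semi-conjugacy extends to all of $\mathbb{R}^3$ and iterates without difficulty. The remainder of the argument is routine bookkeeping.
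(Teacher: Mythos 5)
Your proposal is correct and follows essentially the same route as the paper: transport the capacity-zero complement of the forward orbit across the semi-conjugacy $\tilde F \circ (Z\circ T) = (Z\circ T)\circ \tilde g$, use that the quasiregular map $Z \circ T$ sends capacity-zero sets to capacity-zero sets (the paper cites \cite[Theorem~7.1]{MRV1} for this), and handle the missing point $0$ by adjoining it to the exceptional set. The only cosmetic difference is that the paper phrases the iteration step by choosing the inverse branch $\phi$ so that $(T^{-1}\circ\phi)(U_x)\supset U_y$ and then invoking $\tilde F^k = Z\circ T\circ \tilde g^k\circ T^{-1}\circ\phi$, whereas you use the iterated semi-conjugacy directly; these are the same idea.
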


\begin{proof}
Let $x \in (Z \circ T)(J(\tilde{g}))$ and let $U_{x} \subset \mathbb{R}^3$ be an arbitrary open neighbourhood of $x$. Since $Z \circ T$ is continuous and open, then there exists some $y \in J(\tilde{g})$ and an open neighbourhood $U_{y} \subset \mathbb{R}^3$ of $y$ such that $(Z \circ T)(y) = x$ and $(Z \circ T)(U_{y}) \subset U_{x}$. As $\tilde{F}$ is independent of the choice of inverse branch of $Z$, then choose a branch $\phi$ such that $(T^{-1} \circ \phi)(U_{x}) \supset U_{y}$.

As $y \in J(\tilde{g})$, then by \eqref{JuliaSetDefn2} there exists some set $X \subset \mathbb{R}^3$ with $\capacity(X)=0$ such that
\begin{equation*}
\bigcup_{k=0}^{\infty}\tilde{g}^k(U_{y}) \supset \mathbb{R}^3 \setminus X.
\end{equation*}
It follows from \cite[Theorem~7.1]{MRV1} that $Y:= (Z \circ T)(X) \cup \{0\}$ is such that $\capacity(Y)=0$. As $Z(\mathbb{R}^3) = \mathbb{R}^3 \setminus \{0\}$, then 
\begin{equation*}
(Z \circ T)\left(\bigcup_{k=0}^{\infty}\tilde{g}^k(U_{y})\right) \supset (Z \circ T)(\mathbb{R}^3) \setminus (Z \circ T)(X) = \mathbb{R}^3 \setminus Y.
\end{equation*}

Now observe that
\begin{align*}
\bigcup_{k=0}^{\infty}\tilde{F}^k(U_{x}) &= \bigcup_{k=0}^{\infty}(Z \circ T)(\tilde{g}^k((T^{-1} \circ \phi)(U_{x})))\\
&\supset \bigcup_{k=0}^{\infty}(Z \circ T)(\tilde{g}^k(U_{y}))\\
&= (Z \circ T)\left(\bigcup_{k=0}^{\infty}\tilde{g}^k(U_{y})\right) \supset \mathbb{R}^3 \setminus Y.
\end{align*}

As $U_{x}$ was an arbitrary neighbourhood of $x$, then we have that $x \in J(\tilde{F})$ and hence $(Z \circ T)(J(\tilde{g})) \subset J(\tilde{F})$.
\end{proof}

\section{Dynamics of a family of quasiregular maps}
Let $F$ and $g$ be the quasiregular mappings from Section~3. Then an example of an infinite family of quasiregular mappings satisfying \eqref{Thm1.1} is $\mathcal{F}_{\lambda_{0}} :=\{\lambda F: 0<\lambda<\lambda_{0}\}$. These mappings can be constructed by considering the quasiregular maps $g_{t}: \mathbb{R}^3 \to \mathbb{R}^3$ defined by $g_{t}(x) = g(x) + (0,0,t)$ for each $t \in \mathbb{R}$, which all satisfy (I) - (III). It follows from \eqref{ZorichTranslation} that for all $\lambda >0$ and any inverse branch $\phi$ of $Z$,
\begin{equation}\label{LambdaFDefinition}
\lambda F = Z \circ T \circ g_{\log{\lambda}} \circ T^{-1} \circ \phi.
\end{equation}
The aim of this section is to establish some dynamical results for the family $\mathcal{F}_{\lambda_{0}}$ when $\lambda_{0}>0$ is sufficiently small, culminating in Proposition~\ref{JfandQFfRelations} below. The dynamics of some quasimeromorphic mappings associated with $\mathcal{F}_{1}$ can also be considered, yielding Lemma~\ref{QmDynamics} below; this together with Proposition~\ref{JfandQFfRelations} will prove Theorem~\ref{Thm1.3Dynamics}.

It was shown in \cite[Section~7]{NS2} that there exists a constant $0<C<1$ such that if $0< \lambda \leq C$, then
\begin{equation*}
g_{\log{\lambda}}(\mathbb{H}_{\leq L}) \subset \mathbb{H}_{< 0},
\end{equation*}
where $L >1$ is the constant from property (G2) of $g$. Moreover for such values of $\lambda>0$, it was shown that the quasi-Fatou set $QF(g_{\log{\lambda}})$ consists of a single connected domain containing the lower half-space $\mathbb{H}_{< 0}$, in which all iterates of $g_{\log{\lambda}}$ tend to infinity locally uniformly. In particular, for every $x \in QF(g_{\log{\lambda}})$ there exists some $k \in\mathbb{N}$ such that $g_{\log{\lambda}}^k(x) \in \mathbb{H}_{< 0}$.

By using the semi-conjugacy of \eqref{LambdaFDefinition}, we shall first give a result relating the quasi-Fatou set of $g_{\log\lambda}$ to the attracting basin of $\lambda F$ at 0. Here, for a general mapping $f$, the attracting basin of $0$ with respect to $f$ is defined by $\mathcal{A}_{f}(0):= \{x \in \mathbb{R}^3 : f^n(x) \to 0 \text{ as } n \to \infty\}$.

\begin{lem}\label{AttractingBasin}
Let $0<\lambda \leq C$. Then $(Z \circ T)(QF(g_{\log{\lambda}})) \subset \mathcal{A}_{\lambda F}(0)$.
\end{lem}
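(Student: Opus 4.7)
The plan is to leverage the semi-conjugacy in \eqref{LambdaFDefinition}, which (exactly as in \eqref{MapIteration}) iterates to give
\[
(\lambda F)^n \circ (Z \circ T) = (Z \circ T) \circ g_{\log\lambda}^n \quad \text{on } \mathbb{R}^3 \setminus \{0\}
\]
for every $n \in \mathbb{N}$. Given $y \in QF(g_{\log\lambda})$ and writing $z := (Z \circ T)(y)$, it therefore suffices to prove that $(Z \circ T)(g_{\log\lambda}^n(y)) \to 0$ as $n \to \infty$.

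To control the orbit of $y$ under $g_{\log\lambda}$, I would invoke the dynamical facts recalled just before the lemma: there exists some $k \in \mathbb{N}$ with $g_{\log\lambda}^k(y) \in \mathbb{H}_{<0}$. Property (G1) of $g$ then gives $g_{\log\lambda}(x) = x + (0,0,\log\lambda)$ for every $x \in \mathbb{H}_{\leq 0}$, and since $\lambda \leq C < 1$ forces $\log\lambda < 0$, a routine induction shows that
\[
g_{\log\lambda}^n(y) = g_{\log\lambda}^k(y) + \bigl(0, 0, (n-k)\log\lambda\bigr) \quad \text{for all } n \geq k,
\]
with third coordinate tending to $-\infty$.

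The final step is to observe that $T$ acts only on the first two coordinates, so applying \eqref{ZorichTranslation} yields
\[
\bigl|(Z \circ T)(g_{\log\lambda}^n(y))\bigr| = \exp\bigl((n-k)\log\lambda\bigr) \bigl|(Z \circ T)(g_{\log\lambda}^k(y))\bigr| \longrightarrow 0
\]
as $n \to \infty$. Combined with the iterated semi-conjugacy this gives $(\lambda F)^n(z) \to 0$, i.e.\ $z \in \mathcal{A}_{\lambda F}(0)$.

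The argument is essentially mechanical once the ingredients above are assembled; the only mild subtlety is verifying that the semi-conjugacy can be iterated freely. This is fine because $Z$ takes values in $\mathbb{R}^3 \setminus \{0\}$, so every $(Z \circ T)(x)$ lies in $\mathbb{R}^3 \setminus \{0\}$, keeping each application of \eqref{LambdaFDefinition} inside its stated domain of validity.
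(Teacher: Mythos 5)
Your proof is correct and follows essentially the same route as the paper: both use the fact recalled before the lemma that some iterate $g_{\log\lambda}^k(y)$ lands in $\mathbb{H}_{<0}$, then track further iterates there via (G1), and finally apply \eqref{ZorichTranslation} together with the iterated semi-conjugacy to conclude that $(\lambda F)^n((Z\circ T)(y)) = \lambda^{n-k}(Z\circ T)(g_{\log\lambda}^k(y)) \to 0$.
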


\begin{proof} 

Let $x \in (Z \circ T)(QF(g_{\log{\lambda}}))$, so there exists some $y \in QF(g_{\log{\lambda}})$ such that $(Z \circ T)(y)=x$. As $\lambda \leq C<1$, there exists some $k \in \mathbb{N}$ such that $g_{\log{\lambda}}^{k}(y) \in \mathbb{H}_{< 0}$. Now by choosing $\phi$ such that $\phi(x)=T(y)$, then using \eqref{LambdaFDefinition} and (G1) we get that for all $n \in \mathbb{N}$,
\begin{equation*}
(\lambda F)^{n+k}(x) = (Z \circ T \circ g_{\log{\lambda}}^n \circ g_{\log{\lambda}}^k \circ T^{-1} \circ \phi)(x)
= (Z \circ T \circ g_{\log{\lambda}^n})(g_{\log{\lambda}}^k(y)).
\end{equation*}

It follows from \eqref{ZorichTranslation} that
\begin{equation*}
(Z \circ T \circ g_{\log{\lambda}^n})(g_{\log{\lambda}}^k(y)) =\lambda^{n}(Z \circ T)(g_{\log{\lambda}}^k(y)) \to 0 \text{ as } n \to \infty,
\end{equation*}
and the proof follows.
\end{proof}

By using the above lemma, we can establish a strong relationship between the Julia sets and quasi-Fatou sets of $g_{\log{\lambda}}$ and $\lambda F$ as follows.

\begin{prop}\label{JfandQFfRelations}
Let $0<\lambda\leq C$ and let $L>1$ be the constant from property \emph{(G2)} of $g$. Then 
\begin{itemize}
	\item[\emph{(i)}] $J(\lambda F) = (Z \circ T)(J(g_{\log{\lambda}}))$ contains the half-rays $\{t(1,1,0) : |t|>\max\{\log(1/\lambda), \exp(L)\}\}$, and
	\item[\emph{(ii)}] $QF(\lambda F) = \mathcal{A}_{\lambda F}(0) = (Z \circ T)(QF(g_{\log{\lambda}})) \cup \{0\}$ is connected and contains $B(0,1/2)$.
\end{itemize}
\end{prop}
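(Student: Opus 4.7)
The plan is to prove (ii) first by a dichotomy argument, from which the set equality in (i) follows; the containment of half-rays then reduces to a direct computation using property (G2) of $g$ and the explicit formula for $Z$. The key preliminary is that $\mathcal{A}_{\lambda F}(0) \subset QF(\lambda F)$ and $B(0,1/2) \subset \mathcal{A}_{\lambda F}(0)$. The construction of $F$ in Section~3 shows that $F$ fixes every point of $B(0,1/2) \setminus \{0\}$, so $\lambda F$ acts as $x \mapsto \lambda x$ on $B(0,1/2)$; since $\lambda \leq C < 1$, iterates converge uniformly to $0$ on this ball, giving $B(0,1/2) \subset \mathcal{A}_{\lambda F}(0)$. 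For an arbitrary $x \in \mathcal{A}_{\lambda F}(0)$, continuity of $\lambda F$ produces a neighbourhood $U$ of $x$ whose forward orbit is eventually absorbed into $B(0,1/2)$ and hence bounded; its complement has positive capacity, so $x \in QF(\lambda F)$.

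For the dichotomy, set $A := (Z \circ T)(QF(g_{\log \lambda}))$ and $B := (Z \circ T)(J(g_{\log \lambda}))$. Lemma~\ref{AttractingBasin} together with the previous step yields $A \subset \mathcal{A}_{\lambda F}(0) \subset QF(\lambda F)$, while Lemma~\ref{ModifyGJuliaSet} yields $B \subset J(\lambda F)$. Since $QF(\lambda F) \cap J(\lambda F) = \emptyset$ the sets $A, B$ are disjoint, and since $Z \circ T$ surjects onto $\mathbb{R}^3 \setminus \{0\}$ and $J(g_{\log \lambda}) \cup QF(g_{\log \lambda}) = \mathbb{R}^3$ we have $A \cup B = \mathbb{R}^3 \setminus \{0\}$. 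Combining this with $QF(\lambda F) \cup J(\lambda F) = \mathbb{R}^3$ and $0 \in \mathcal{A}_{\lambda F}(0)$ forces the equalities $QF(\lambda F) = \mathcal{A}_{\lambda F}(0) = A \cup \{0\}$ and $J(\lambda F) = B$. Connectedness of $A \cup \{0\}$ follows from connectedness of $QF(g_{\log \lambda})$, continuity of $Z \circ T$, and the observation that $(0,0,-n) \in \mathbb{H}_{<0} \subset QF(g_{\log \lambda})$ with $(Z \circ T)((0,0,-n)) \to 0$ as $n \to \infty$, placing $0$ in $\overline{A}$.

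For the half-rays, I would identify explicit preimages under $Z \circ T$. Using the fundamental-domain formula for $Z$ and the relation $Z(R_{(1,1)}x) = Z(x)$, one finds $(Z \circ T)((2,2,\log t)) = t(1,1,0)$ for $t > 0$ and $(Z \circ T)((0,0,\log|t|)) = t(1,1,0)$ for $t < 0$. Assuming $|t| > M := \max(\log(1/\lambda), \exp(L))$, both preimages lie in $\mathbb{H}_{\geq L}$, so property (G2) applies; a short calculation using the Zorich formula shows that $g_{\log \lambda}$ sends them to $(2,2,\log t + t + \log\lambda)$ and $(0,0,\log|t| + |t| + \log\lambda)$ respectively, i.e.\ to points of the same form with $|t|$ replaced by $\lambda|t|e^{|t|}$. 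The assumption $|t| > M$ ensures that this new value still exceeds $M$, so by induction the orbit remains of the same form and stays in $\mathbb{H}_{\geq L}$, never entering $\mathbb{H}_{<0}$. Since every point of $QF(g_{\log \lambda})$ has some iterate in $\mathbb{H}_{<0}$, the preimage lies in $J(g_{\log \lambda})$, and the equality $J(\lambda F) = B$ established above then gives $t(1,1,0) \in J(\lambda F)$.

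The main obstacle I anticipate is the half-ray calculation: one must correctly combine the rotation identity $Z(R_{(1,1)}x) = Z(x)$ (which is what makes the two types of preimages differ) with property (G2) and the Zorich formula, and then verify that the induced one-dimensional dynamics $|t| \mapsto \lambda|t|e^{|t|}$ keeps the orbit safely in $\mathbb{H}_{\geq L}$. Once these computations are in place, the dichotomy argument is essentially formal and delivers both set equalities simultaneously.
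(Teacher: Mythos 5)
Your proposal is correct, and the core of it — using Lemma~\ref{AttractingBasin}, Lemma~\ref{ModifyGJuliaSet}, and the fact that $B(0,1/2)\subset\mathcal{A}_{\lambda F}(0)\cap QF(\lambda F)$ to run a dichotomy across the partition $\mathbb{R}^3=QF\sqcup J$ — is essentially the paper's argument, just packaged a little differently (the paper deduces $J(\lambda F)\subset (Z\circ T)(J(g_{\log\lambda}))$ pointwise via a preimage under $Z\circ T$, and then derives (ii) from (i), whereas you derive both set equalities simultaneously from the disjoint cover; these are the same idea). Two steps do diverge from the paper. First, for $\mathcal{A}_{\lambda F}(0)\subset QF(\lambda F)$ the paper simply invokes complete invariance of $\mathcal{A}_{\lambda F}(0)$ and $QF(\lambda F)$ once $B(0,1/2)$ is in both, while you argue directly that a point of the attracting basin has a neighbourhood with bounded forward orbit; both work, though the complete-invariance route is shorter. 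Second, and more substantially, for the half-ray containment the paper cites a remark from \cite[Section~7]{NS2} which exhibits $\{(4n+c,4m+c,x_3):c\in\{0,2\},\,x_3>\log L'\}\subset J(g_{\log\lambda})$ and then pushes forward by $Z\circ T$; you instead reproduce the underlying computation yourself, identifying the explicit preimages $(2,2,\log t)$ and $(0,0,\log|t|)$, applying (G2) and the Zorich formula to get the one-dimensional recursion $|t|\mapsto\lambda|t|e^{|t|}$, checking it is trapped above $L'$, and concluding membership in $J(g_{\log\lambda})$ because no iterate ever reaches $\mathbb{H}_{<0}$. Your calculation is correct (note $\log t'=\log t+(t+\log\lambda)>\log t>L$ since $t>\log(1/\lambda)$ and $t>\exp(L)$), and this route has the advantage of being self-contained rather than relying on an unproved remark in a reference; the cost is that it only handles the particular lattice lines you need, whereas the cited remark gives the full lattice family. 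Your connectedness argument via the sequence $(0,0,-n)$ is also fine, though the paper's shortcut — $B(0,1/2)\subset QF(\lambda F)$ already bridges $\{0\}$ to $(Z\circ T)(QF(g_{\log\lambda}))$ — is a touch more economical.
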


\begin{proof}
	
Firstly, recall that $F$ is the identity on $B(0,1/2)$. It immediately follows that for all $0<\lambda<1$, then $B(0,1/2) \subset \mathcal{A}_{\lambda F}(0) \cap QF(\lambda F)$. As $\mathcal{A}_{\lambda F}(0)$ and $QF(\lambda F)$ are completely invariant, then
\begin{equation}\label{AIsInQF}
\mathcal{A}_{\lambda F}(0) \subset QF(\lambda F).
\end{equation} 
		
To prove that $J(\lambda F) = (Z \circ T)(J(g_{\log{\lambda}}))$, observe that from Lemma~\ref{ModifyGJuliaSet} we have $(Z \circ T)(J(g_{\log{\lambda}})) \subset J(\lambda F)$. For the reverse inclusion, let $x \in J(\lambda F)$. As $x \neq 0$, then there exists some $y \in \mathbb{R}^3$ that $(Z\circ T)(y)=x$. It then follows from Lemma~\ref{AttractingBasin} and \eqref{AIsInQF} that $y \not \in QF(g_{\log \lambda})$. Therefore $y \in J(g_{\log \lambda})$, as required.

Now by a remark in \cite[Section~7]{NS2}, it was shown that
\begin{equation*}
\{(4n +c,4m +c,x_{3}) : n,m \in \mathbb{Z}, c \in \{0,2\}, x_{3}>\log(L')\} \subset J(g_{\log{\lambda}}),
\end{equation*}
where direct calculation yields $L' = \max\{\log(1/\lambda), \exp(L)\} >1$. The first part of (i) now implies $\{t(1,1,0) : |t|>L'\} \subset J(\lambda F)$, completing the proof of (i).

To prove (ii), note that from (i) we have $QF(\lambda F) \subset (Z \circ T)(QF(g_{\log{\lambda}})) \cup \{0\}$, since $(Z \circ T)(\mathbb{R}^3) = \mathbb{R}^3 \setminus \{0\}$. Further from Lemma~\ref{AttractingBasin} and \eqref{AIsInQF},
\begin{equation*}
QF(\lambda F) \subset (Z \circ T)(QF(g_{\log{\lambda}})) \cup \{0\}\subset \mathcal{A}_{\lambda F}(0) \subset QF(\lambda F),
\end{equation*}
thus equality is attained. Further, since $QF(g_{\log{\lambda}})$ is a single connected domain and $B(0, 1/2) \subset QF(\lambda F)$, then $QF(\lambda F)$ is connected, completing the proof.
\end{proof}



Define $f_{\lambda} := M \circ \lambda F$, where $M$ is the M\"{o}bius map from \eqref{Mobius}. By considering the behaviour of $\lambda F$ and $M$, we can explicitly locate points whose sequence of iterates under $f_{\lambda}$ contains both a subsequence that always remains bounded and a subsequence that tends to infinity. We denote the set of such points by $BU(f_{\lambda})$. In particular, we can explicitly show that the half-rays from Proposition~\ref{JfandQFfRelations}(i) are in this set.

\begin{lem}\label{QmDynamics}
	Let $0<\lambda<1$, let $L>1$ be the constant from property \emph{(G2)} of $g$ and let $f_{\lambda} = M \circ \lambda F$. Then 
	\begin{equation*}
	\{t(1,1,0) : |t|> L' \} \cup \{t(1,1,0) : 0<|t|< 1/(2L')\} \subset BU(f_{\lambda}),
	\end{equation*}
	where $L' = \max\{\log(1/\lambda), \exp(L)\}$.
\end{lem}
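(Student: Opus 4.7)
The plan is to show that $f_\lambda$ preserves the line $\ell := \{t(1,1,0) : t \in \mathbb{R}\}$, reduce the dynamics on $\ell$ to an explicit one-dimensional map, and observe that every orbit starting in either of the two given subsets alternates between a large regime and a small regime, producing the required bounded and unbounded subsequences.

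First I would derive the formula
\begin{equation*}
\lambda F(t(1,1,0)) = \lambda\, t \exp(|t|)\, (1,1,0) \qquad \text{for } |t| > L',
\end{equation*}
by unwinding the semi-conjugacy $\lambda F \circ (Z \circ T) = (Z \circ T) \circ g_{\log \lambda}$. Since $\lambda F$ is independent of the choice of inverse branch, one may pick the preimage $y_t := (\operatorname{sgn}(t), \operatorname{sgn}(t), \log|t|)$ of $t(1,1,0)$ under $Z$. The third coordinate of $T^{-1}(y_t)$ is $\log|t|>L$ because $L' \geq \exp(L)$, so property (G2) applies. Using \eqref{ZorichRotation} (when $t>0$, to replace $(2,2,\log t)$ by $(0,0,\log t)$) one computes $Z(T^{-1}(y_t)) = (0,0,|t|)$, and hence $g_{\log\lambda}(T^{-1}(y_t)) = T^{-1}(y_t) + (0,0,|t| + \log\lambda)$. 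Applying $T$ and then $Z$, together with \eqref{ZorichTranslation}, gives the stated formula. Composing with $M$ yields
\begin{equation*}
f_\lambda(t(1,1,0)) = \frac{1}{2\lambda\, t \exp(|t|)}(1,1,0) \qquad \text{for } |t| > L',
\end{equation*}
whose coefficient has absolute value less than $1/(2L')$ since $\lambda\exp(|t|) \geq \lambda\exp(L') \geq 1$.

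Second, for $0 < |t| < 1/(2L')$, the bound $L > 1$ gives $L' > \sqrt{2}$, so $t(1,1,0) \in B(0,1/2)$, where $F$ is the identity; therefore $f_\lambda(t(1,1,0)) = \frac{1}{2\lambda t}(1,1,0)$, whose coefficient has absolute value greater than $L'/\lambda > L'$. Combining the two regimes, any orbit $x_n = t_n(1,1,0)$ starting with $|t_0| > L'$ satisfies the telescoping recursion $|t_{2n+1}| = 1/(2\lambda|t_{2n}|\exp(|t_{2n}|))$ and $t_{2n+2} = t_{2n}\exp(|t_{2n}|)$, so by induction $|t_{2n}| \to \infty$ super-exponentially while $|t_{2n+1}| \to 0$. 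Hence $x_{2n} \to \infty$ and $(x_{2n+1})$ is bounded, placing $x_0$ in $BU(f_\lambda)$. The case $0 < |t_0| < 1/(2L')$ reduces to this after a single step, since $|t_1| = 1/(2\lambda|t_0|) > L'$.

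The main obstacle is the initial formula for $\lambda F$ on the half-ray, since $t(1,1,0)$ lies inside the branch image $V = Z(B_Z) \cup \{0\}$ and no genuine local inverse branch $\phi$ exists there. The remedy is to invoke the continuity of $\lambda F$ on $\mathbb{R}^3$ together with the branch-independence established in Section~3.1, which justifies evaluating the composition by choosing any point $y$ with $Z(y) = t(1,1,0)$. The rotational and translational symmetries \eqref{ZorichRotation}, \eqref{ZorichTranslation} of $Z$, combined with property (G2) of $g$, then collapse the composition to the clean scalar formula above, after which the dynamical conclusion is a routine iteration argument.
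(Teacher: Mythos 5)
Your proof is correct and takes essentially the same route as the paper: derive an explicit scalar formula for $\lambda F$ on the diagonal line (via the semi-conjugacy with $g_{\log\lambda}$ and properties (G1), (G2), \eqref{ZorichRotation}, \eqref{ZorichTranslation}), compose with $M$, and observe that the coefficients of successive iterates alternate between the large regime $|t|>L'$ and the small regime $0<|t|<1/(2L')$, yielding the bounded and unbounded subsequences. The paper compresses the branch-point issue and the derivation of \eqref{LambdaFLineMap} into ``a direct calculation,'' whereas you spell them out and use the slightly narrower observation that $F$ is the identity on $B(0,1/2)$ to handle the small regime; this is a harmless variation.
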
 

\begin{proof}
	Let $0<\lambda<1$ and let $L>1$ be the constant from property (G2) of $g$. Now consider the behaviour of $\lambda F$ on the line $\{t(1,1,0) : t \in \mathbb{R}\}$. Indeed by the properties of $Z$, $g_{\log{\lambda}}$ and \eqref{LambdaFDefinition}, a direct calculation yields
	\begin{equation}\label{LambdaFLineMap}
	(\lambda F)(\alpha(1,1,0)) = 
	\left\lbrace \begin{array}{ll}
	\lambda\exp(|\alpha|)\alpha(1,1,0) & \text{if } |\alpha| >\exp(L), \\
	\lambda\alpha(1,1,0) & \text{if } |\alpha|<1.
	\end{array}\right.
	\end{equation}

	It follows that as $n \to \infty$, then $(\lambda F)^n(\alpha(1,1,0)) \to 0$ when $|\alpha| <1$, while $(\lambda F)^n(\alpha(1,1,0)) \to \infty$ when $|\alpha| >\max\{\log(1/\lambda), \exp(L)\}$. 
	
	Next note that from \eqref{Mobius}, for any $\alpha \in \mathbb{R}$ we have
	\begin{equation*}
	M(\alpha(1,1,0)) = \frac{1}{2\alpha}(1,1,0).
	\end{equation*}
	
	Combining this with \eqref{LambdaFLineMap} and considering the even and odd iterates of $f_{\lambda} = M \circ \lambda F$, the result follows.
\end{proof}

\subsection*{Acknowledgements}
The author would like to thank Dave Sixsmith for many helpful comments and suggestions during the preparation of this paper.


\begin{thebibliography}{99}
	\bibitem {Ahlfors35} L.V Ahlfors: Zur Theorie der \"{U}berlagerungsfl\"{a}chen; \emph{Acta Math}; \textbf{65}, 157-194 (1935).
	\bibitem {Bergweiler6} W. Bergweiler: Fatou-Julia theory for non-uniformly quasiregular maps; \emph{Ergodic Theory Dynam. Systems}; \textbf{33}(1), 1-23 (2013).
	\bibitem {BN1} W. Bergweiler, D.A. Nicks: Foundations for an iteration theory of entire quasiregular maps; \emph{Israel J. Math.}; \textbf{201}(1), 147-184 (2014).
	\bibitem {Drasin77} D. Drasin: The inverse problem of the Nevanlinna theory; \emph{Acta Math}; \textbf{138}, 83-151 (1977).
	\bibitem{DP} D. Drasin, P. Pankka: Sharpness of Rickman's Picard theorem in all dimensions; \emph{Acta Math}; \textbf{214}(2), 209-306 (2015).
	\bibitem{Hayman64} W.K. Hayman: Meromorphic functions; Clarendon Press, Oxford (1964).
	\bibitem {IM3} T. Iwaniec, G. Martin: Geometric Function Theory and Non-linear Analysis, Oxford Mathematical Monographs; Oxford University Press, New York (2001).
	\bibitem {MRV1} O. Martio, S. Rickman, J. V\"{a}is\"{a}l\"{a}: Definitions for quasiregular mappings; \emph{Ann. Acad. Sci. Fenn. Ser. A I Math.}; \textbf{448}, 1-40 (1969).
	\bibitem{Nevanlinna25} R. Nevanlinna: Zur Theorie der meromorphen Funktionen; \emph{Acta Math}; \textbf{46}, 1-99, (1925).
	\bibitem {NS2} D.A. Nicks, D.J. Sixsmith: Periodic domains of quasiregular maps; \emph{Ergodic Theory Dynam. Systems}; \textbf{38}(6), 2321-2344 (2018).
	\bibitem {Reshetnyak1} Y. Reshetnyak: Space mappings with bounded distortion. (Russian); \emph{Sibirsk. Mat. Zh.}; \textbf{8}, 629-659 (1967).
	\bibitem {Reshetnyak2} Y. Reshetnyak: On the condition of the boundedness of index for mappings with bounded distortion. (Russian); \emph{Sibirsk. Mat. Zh.}; \textbf{9}, 368-374 (1968).
	\bibitem {Rickman80} S. Rickman: Asymptotic values and angular limits of quasiregular mappings of a ball; \emph{Ann. Acad. Sci. Fenn. Ser. A I Math.}; \textbf{5}, 185-196 (1980).
	\bibitem {Rickman85} S. Rickman: The analogue of Picard's theorem for quasiregular mappings in dimension three; \emph{Acta Math}; \textbf{154}, 195-242 (1985).
	\bibitem {Rickman92} S. Rickman: Defect relation and its realization for quasiregular mappings; \emph{Ann. Acad. Sci. Fenn. Ser. A I Math.}; \textbf{20}, 207-243 (1992).
	\bibitem {Rickman3} S. Rickman: Quasiregular mappings; Ergebnisse der Mathematik und ihrer Grenzgebiete, vol. 3; Springer, Berlin (1993).

	\bibitem {Warren1} L. Warren: On the iteration of quasimeromorphic mappings; \emph{Math. Proc. Cambridge Philos. Soc.}, to appear in print; doi:10.1017/S030500411800052X 
	\bibitem {Zorich} V.A. Zorich: A theorem of M. A. Lavrent'ev on quasiconformal maps; \emph{Math. Sb.}; \textbf{74}(3), 417-433 (1967).
\end{thebibliography}
\end{document}